\documentclass[11pt]{amsart}

\usepackage{hyperref}
\usepackage{amsfonts}
\usepackage{amssymb}
\usepackage{graphicx}
\usepackage{amsmath,mathtools}
\usepackage{latexsym,enumerate}
\usepackage{amscd}
\usepackage{xypic}
\usepackage{mathrsfs}
\usepackage{enumitem} 
\usepackage{braket}
\usepackage[margin=1in]{geometry}
\usepackage{amsthm}
\usepackage{accents}

\allowdisplaybreaks


\newtheorem{prop}{Proposition}[section]
\newtheorem{theo}[prop]{Theorem}
\newtheorem{lemm}[prop]{Lemma}
\newtheorem{coro}[prop]{Corollary}
\newtheorem{rema}[prop]{Remark}
\newtheorem{conj}[prop]{Conjecture}

\theoremstyle{definition}


\newcommand{\RR}{\mathbb{R}}
\renewcommand{\SS}{\mathbb{S}}





\newcommand{\bangle}[1]{\left\langle #1 \right\rangle}

\newcommand{\abs}[1]{\left|{#1}\right|}

\setlength{\marginparwidth}{.5in}
\let\oldmarginpar\marginpar
\renewcommand\marginpar[1]{\-\oldmarginpar[\raggedleft\footnotesize #1]%
{\raggedright\footnotesize #1}}

\usepackage{graphicx}

\allowdisplaybreaks

\title{Uniqueness of the bowl soliton}

\author{Robert Haslhofer}
\date{August 15, 2014}
\thanks{Supported by NSF Grant DMS-1406394.}

\begin{document}

\maketitle

\begin{abstract}
We prove that any translating soliton for the mean curvature flow which is noncollapsed and uniformly 2-convex must be the rotationally symmetric bowl soliton.
In particular, this proves a conjecture of White and Wang, in the 2-convex case in arbitrary dimension.
\end{abstract}

\section{Introduction}

A hypersurface $M^n\subset\mathbb{R}^{n+1}$ (complete, embedded, oriented) is called a \emph{translating soliton} if its mean curvature $H$ and its normal vector $\nu$ are related by the equation
\begin{equation}\label{eq_soliton}
H=\langle V,\nu\rangle,
\end{equation}
for some $0\neq V\in \mathbb{R}^{n+1}$. Solutions of \eqref{eq_soliton} correspond to translating solutions $\{M_t=M+tV\}_{t\in\mathbb{R}}$ of the mean curvature flow,
\begin{equation}
\partial_t x=H\nu.
\end{equation}
Translating solitons play a key role in the study of slowly forming singularities, see e.g. \cite{Ham_harnack,AngVel,HS1,HS2,White_nature}, and have received a lot of attention in the last 20 years.\\

It is not hard to see that there exists a unique solution (up to rigid motion) of \eqref{eq_soliton} which is rotationally symmetric and strictly convex \cite{AltWu}.
For $n=1$, this is the \emph{grim reaper} \cite{Mullins}, given by the explicit formula $y=-\log\cos(x)$, $x\in(-\pi/2,\pi/2)$.
For $n\geq 2$, which we assume from now on, the solution roughly looks like a paraboloid, and is usually called the \emph{bowl soliton}.\\

A well known problem concerns the uniqueness of solutions of \eqref{eq_soliton}, see e.g. White \cite{White_nature}: Conjecture 2 on page 133, and the unnumbered remark at the bottom of the page 133.
A very important contribution was made by X.~Wang \cite{wang_convex}, who proved that for $n=2$ any entire convex solution must be the rotationally symmetric bowl soliton,
but that for $n\geq 3$ there exist entire strictly convex solution that are not rotationally symmetric.
However, the most relevant solutions of \eqref{eq_soliton} are of course the ones that actually arise as singularity models for the mean curvature flow, and it is unknown whether or not Wang's solutions can actually arise as singularity models.

\begin{conj}[{White \cite[p. 133]{White_nature}, Wang \cite[p. 1237]{wang_convex}}]\label{conjecture}
 All translating solitons that arise as blowup limit of a mean convex mean curvature flow must be rotationally symmetric.
\end{conj}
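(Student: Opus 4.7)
My plan is to reduce Conjecture~\ref{conjecture}, in the uniformly 2-convex regime announced in the abstract, to a rigidity statement about translating solitons, and then to prove that rigidity via three steps: convexity, asymptotic roundness at infinity, and rotational symmetry.

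\emph{Reduction and convexity.} Blow-up limits of mean convex mean curvature flows inherit noncollapsing by the estimates of Andrews and Haslhofer-Kleiner, and uniform 2-convexity passes to the limit. Hence it suffices to show that any translating soliton $M$ solving \eqref{eq_soliton} which is $\alpha$-noncollapsed and uniformly 2-convex must be the bowl; after a rigid motion assume $V=-e_{n+1}$. Since $M$ generates an eternal mean curvature flow, the structure theory of noncollapsed 2-convex ancient solutions forces $M$ either to split off a line (ruled out since shrinking cylinders do not translate) or to be strictly convex. Thus $M$ is a strictly convex graph $x_{n+1}=-u(x_1,\dots,x_n)$ with a unique tip, which I place at the origin.

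\emph{Asymptotic roundness.} I would then perform a blow-down analysis: rescale $M$ by $\lambda\to 0$. The 2-convexity together with noncollapsing forces the blow-down limit to be the round shrinking cylinder $\mathbb{R}\times S^{n-1}_{\sqrt{2(n-1)}}$. Translating this information back, the level sets $\{u=r^2/2\}$ rescaled by $1/r$ converge as $r\to\infty$, smoothly on compact sets, to a round $(n-1)$-sphere of radius $\sqrt{2(n-1)}$; in particular the principal curvatures become asymptotically equal on every cross-section far from the tip.

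\emph{Rotational symmetry and conclusion.} This is the core of the argument and the principal obstacle. The goal is to upgrade the asymptotic $C^\infty$ roundness to a genuine $SO(n)$-symmetry of $M$ about the vertical axis through the tip. I would attempt either (i) a moving plane/Serrin-type argument: for every hyperplane $\Pi$ containing $V$ and passing through the tip, reflect $M$ across $\Pi$; asymptotic roundness gives that $M$ and its reflection agree at infinity, and the strong maximum principle applied to the quasilinear elliptic soliton PDE satisfied by $u$ propagates the equality everywhere; or (ii) a neck-improvement theorem that quantitatively propagates cylindrical symmetry from high levels back down to any bounded region. What makes this delicate, and what distinguishes the present setting from the asymmetric strictly convex translators constructed by X.~Wang for $n\ge 3$, is that noncollapsing is precisely the quantitative hypothesis that rules out slowly-spreading paraboloidal deformations breaking symmetry. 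Once $M$ is shown to be rotationally symmetric about the vertical axis through the tip, the Altschuler-Wu uniqueness theorem identifies $M$ with the bowl soliton.
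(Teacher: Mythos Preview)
First, a framing remark: the statement you are addressing is a \emph{conjecture}; the paper does not prove it in full, only the uniformly $2$-convex case (Theorem~\ref{main_thm} and Corollary~\ref{main_cor}). You correctly restrict to that regime, so the relevant comparison is between your sketch and the paper's proof of Theorem~\ref{main_thm}.

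Your reduction (noncollapsing and uniform $2$-convexity pass to blowup limits) and your convexity step match the paper's setup (Section~\ref{sec_asympt} and \cite[Cor.~2.15]{HK}), as does your blow-down to a shrinking cylinder, which is exactly Proposition~\ref{prop_asympt}. So the first two steps are in line with the paper.

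The divergence, and the genuine gap, is in your third step. Your option (i), Alexandrov reflection across planes through the tip, does not go through with only the blow-down information at hand. Proposition~\ref{prop_asympt} tells you that the level set $\{z=h\}$, rescaled by $h^{-1/2}$, is $o(1)$-close to a round sphere; in absolute terms the deviation from roundness is only $o(h^{1/2})$, which can diverge. That is far too weak to conclude that $M$ and its reflection ``agree at infinity'' in any sense the moving-plane method can use: to start the reflection one needs the asymmetry to be $o(1)$ (or at least one-signed) at large heights, and nothing in the blow-down gives this. Indeed, the paper notes that Martin--Savas-Halilaj--Smoczyk \cite{MSHS} carry out a reflection argument only under strong pointwise asymptotic hypotheses (their equation~(3.1)), precisely because this gap cannot be bridged by soft cylindrical asymptotics alone. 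Your option (ii), a neck-improvement scheme, is stated only as a name, without a mechanism.

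What the paper actually does for this core step is a Brendle-style argument (Sections~\ref{sec_ell_est}--\ref{sec_blowdown}): one studies the rotation functions $f_R=\langle R,\nu\rangle$, which satisfy the Jacobi-type equation $(\Delta_z+|A|^2)f_R=0$; a weighted maximum principle (Proposition~\ref{prop_ell_est}) controls $f_R/H$ by its boundary values; a spectral decay estimate on the shrinking cylinder (Proposition~\ref{lemma_decay}) shows that, after subtracting an optimal translation mode, solutions of the linearized equation decay like $(1-t)^{1/2+1/(n-1)}$; and a blowdown contradiction argument with the centered quantity $B(h)=\inf_{x}\sup_{R\in\mathcal{R}_x}\sup_{\{z=h\}}|f_R|$ forces $B\equiv 0$. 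The key idea you are missing is this recentering---taking the infimum over translations---which upgrades the naive growth bound $B(h)\le O(h^{1/2})$ into a strict self-improvement inequality incompatible with $B>0$. Neither moving planes nor an unspecified neck-improvement supplies this mechanism.
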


The conjecture is motivated by the deep regularity and structure theory for mean convex mean curvature flow due to White \cite{White_size,White_nature}, see also Haslhofer-Kleiner \cite{HK}.
In particular, it is known that the only \emph{shrinking} solitons that can occur as blowup limits in the mean convex case are the round shrinking cylinders $S^j\times \RR^{n-j}$.
Recently, Colding-Minicozzi \cite{CM_lojas} proved that even the axis of such a cylindrical tangent flow is unique, i.e. independent of the sequence of rescaling factors.\\

In general, a very important feature of blowup limits of a mean convex mean curvature flow is that they are always noncollapsed \cite{White_size,sheng_wang,andrews1,HK}. 
To address Conjecture \ref{conjecture} we can thus focus on solutions of \eqref{eq_soliton} that are \emph{$\alpha$-noncollapsed},
i.e. solutions with positive mean curvature such that at each $p\in M$ the inscribed radius and the outer radius are at least $\tfrac{\alpha}{H(p)}$.
Indeed, by the references quoted above, see e.g. \cite[Thm. 3]{andrews1} and \cite[Thm. 1.14]{HK},
every blowup limit of a mean convex flow is $\alpha$-noncollapsed for some $\alpha>0$.\footnote{In fact, a more quantitative analysis shows that the optimal constant $\alpha$ is at least $1$ \cite[Cor 1.5]{HK_rem}.}\\


While we do not know at the moment how to address Conjecture \ref{conjecture} in full generality (see however Remark \ref{rem_generalcase}),
in the present article we manage to prove it in an important special case, namely the uniformly $2$-convex case in arbitrary dimension, see Corollary \ref{main_cor}.
We recall that an oriented hypersurface is called \emph{uniformly 2-convex}, if it is mean convex and satisfies
\begin{equation}
 \lambda_1+\lambda_2\geq \beta H
\end{equation}
for some $\beta>0$, where $\lambda_1\leq\lambda_2\leq\ldots\leq \lambda_n$ denotes the principal curvatures.
Uniform 2-convexity is preserved under mean curvature flow, and arises e.g. in the construction of mean curvature flow with surgery by Huisken-Sinestrari \cite{HS3},
see also Haslhofer-Kleiner \cite{HK_surgery} and Brendle-Huisken \cite{BrendleHuisken}.
In this setting of flows with surgery, there can be some high curvature regions, e.g. regions like a degenerate neckpinch, that are modelled on translating solitons that must be $\alpha$-noncollapsed and uniformly $2$-convex.
It turns out, that in such a situation one can always find an almost round cylindrical region at controlled distance from the tip, where one can perform the surgery,
and thus (somewhat surprisingly) one can prove the existence of mean curvature flow with surgery without actually knowing whether or not the translating solitons that occur are rotationally symmetric.
However, in addition to existence, one of course wants to know how the flow with surgery looks like.
In particular, one wonders whether all translating solitons that occur as singularity models in this context are actually rotationally symmetric.\\

Our main theorem of the present article is the following.

\begin{theo}\label{main_thm}
Any solution of the translating soliton equation \eqref{eq_soliton} which is $\alpha$-noncollapsed and uniformly 2-convex must be the rotationally symmetric bowl soliton.
\end{theo}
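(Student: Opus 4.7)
My plan is to combine the structure theory for noncollapsed mean convex flows with a Jacobi-field rigidity argument, reducing the problem to the rotationally symmetric case where the theorem of \cite{AltWu} applies. Throughout I will view the translating family $\{M+tV\}_{t\in\RR}$ as an ancient mean curvature flow that is $\alpha$-noncollapsed and uniformly $2$-convex, which is the natural setting to invoke the results quoted in the introduction.

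As a first step I would establish convexity and uniform curvature bounds. The soliton equation immediately gives $0<H=\langle V,\nu\rangle\leq |V|$, and combined with $\alpha$-noncollapsing this forces $|\sff|\leq C$. The convexity estimate for noncollapsed flows of Haslhofer--Kleiner \cite{HK} then upgrades uniform $2$-convexity to full convexity of $M$. After a rotation and translation I may assume $V=|V|e_{n+1}$ and that the tip of $M$ sits at the origin, so that $M$ is a strictly convex entire graph opening upward.

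Next I would analyze the asymptotic geometry at infinity. Taking a blowdown of $M$, equivalently a parabolic rescaling of the ancient flow as $t\to -\infty$, produces an $\alpha$-noncollapsed, uniformly $2$-convex, convex shrinking soliton. The classification of such shrinkers leaves only the round cylinder $S^{n-1}\times\RR$, and Colding--Minicozzi's uniqueness of the cylindrical axis \cite{CM_lojas} picks out a well-defined axis $L\subset\RR^{n+1}$ which by symmetry of the construction must be parallel to $V$. Consequently, outside any sufficiently large compact set $M$ is smoothly $\varepsilon$-close at each scale to the round cylinder about $L$, with $\varepsilon\to 0$ as one moves toward infinity.

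The crux of the argument, and the main obstacle, is promoting this asymptotic rotational symmetry about $L$ to exact rotational symmetry of $M$. For any Killing rotation vector field $K$ of $\RR^{n+1}$ fixing $L$, the function $u_K=\langle K,\nu\rangle$ on $M$ is obtained by linearizing the soliton equation along the one-parameter group generated by $K$, and therefore lies in the kernel of the linearized translator operator:
\begin{equation*}
\Delta u_K+|\sff|^2 u_K-\langle V,\nabla u_K\rangle=0.
\end{equation*}
The asymptotic cylindrical structure, together with axis uniqueness, forces $u_K\to 0$ at infinity along $M$. A maximum principle / Liouville-type argument, in which uniform $2$-convexity and $\alpha$-noncollapsing supply the necessary geometric control and barrier behaviour, then yields $u_K\equiv 0$. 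Since this holds for every rotation $K$ about $L$, the soliton $M$ is rotationally symmetric about $L$, and the uniqueness result of Altschuler--Wu \cite{AltWu} then identifies $M$ with the bowl soliton. The delicate point is controlling $u_K$ globally from only an asymptotic decay statement, which is where the two hypotheses of the theorem enter in an essential way.
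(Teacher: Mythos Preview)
Your outline correctly identifies the relevant Jacobi field $u_K=\langle K,\nu\rangle$ and the elliptic equation it satisfies, but the step ``the asymptotic cylindrical structure, together with axis uniqueness, forces $u_K\to 0$ at infinity'' is not true, and this is precisely where the whole difficulty of the theorem lies. At height $z$ the cross section of $M$ has radius of order $z^{1/2}$, so the rotation field $K$ has $|K|\sim z^{1/2}$ on $M$; asymptotic closeness to a cylinder is a \emph{scale-invariant} statement, and after undoing the rescaling it only yields $|u_K|=o(z^{1/2})$, not $u_K\to 0$. Worse, even if $u_K\to 0$ at infinity, the maximum principle you invoke fails because the operator $\Delta+\nabla_{\bar V}+|A|^2$ has a positive zeroth order term: the mean curvature $H$ itself satisfies the same equation, is strictly positive, and tends to $0$ at infinity, so a Liouville argument from decay alone cannot force $u_K\equiv 0$. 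Finally, Colding--Minicozzi's axis uniqueness \cite{CM_lojas} is proved for tangent flows at a singular point, not for blowdowns of eternal translators, so its invocation here would itself need justification.

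The paper deals with exactly these obstacles. Instead of bounding $u_K$ directly it bounds the ratio $f_R/H$, which \emph{does} satisfy a maximum principle (Proposition~\ref{prop_ell_est}); this converts the problem into controlling $\sup_{\{z=h\}}|f_R|$ relative to $H\sim h^{-1/2}$. The axis is not fixed a priori: one sets $B(h)=\inf_{x\in\RR^n}\sup_{R\in\mathcal R_x}\sup_{\{z=h\}}|f_R|$, and observes that optimizing over the center is the same as subtracting a translation function $f_T$. On the limiting shrinking cylinder this subtraction kills the first spherical harmonic mode and yields the improved decay $(1-t)^{1/2+1/(n-1)}$ (Proposition~\ref{lemma_decay}). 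An iteration then pits this decay against the a priori growth $B(h)\le O(h^{1/2})$ to force $B\equiv 0$. Your sketch does not contain any mechanism playing the role of this centering/iteration step, and without it the argument cannot close.
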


Theorem \ref{main_thm} holds in arbitrary dimension.

\begin{rema} In the special case $n=2$, the uniform 2-convexity assumption is of course automatic for $\beta=1$.
 In particular, this yields a shorter proof of the 2-dimensional uniqueness result of Wang \cite[Thm. 1.1]{wang_convex}, under somewhat different assumptions (Wang assumes that the solution is convex and that it can be written as an entire graph. We assume that the solution $\alpha$-noncollapsed).
\end{rema}

As an immediate consequence of Theorem \ref{main_thm}, we obtain an affirmative answer to the conjecture of White and Wang (Conjecture \ref{conjecture}), in the 2-convex case in arbitrary dimension.

\begin{coro}\label{main_cor}
 The only translating soliton that can arise as blowup limit of a mean curvature flow of closed embedded 2-convex hypersurfaces, is the rotationally symmetric bowl soliton.
\end{coro}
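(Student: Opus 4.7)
The plan is to deduce Corollary~\ref{main_cor} directly from Theorem~\ref{main_thm} by verifying, for any translating soliton $M$ arising as a blowup limit of a mean curvature flow of closed embedded 2-convex hypersurfaces, both hypotheses of the theorem: $\alpha$-noncollapsedness for some $\alpha>0$, and uniform 2-convexity. Since both conditions are scale-invariant, it suffices to establish them along the original flow and check that they transfer to smooth subsequential limits of parabolic rescalings.

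For $\alpha$-noncollapsedness, I would simply invoke the noncollapsing theorems already quoted in the introduction: by \cite[Thm.~3]{andrews1} (or equivalently \cite[Thm.~1.14]{HK}) every mean curvature flow of a closed embedded mean convex hypersurface is $\alpha$-noncollapsed for some $\alpha>0$ depending only on the initial data, and since the inscribed/outer radius inequalities $r_{\mathrm{in}}(p)\geq \tfrac{\alpha}{H(p)}$ and $r_{\mathrm{out}}(p)\geq \tfrac{\alpha}{H(p)}$ are scale-invariant, they pass to any smooth blowup limit. For uniform 2-convexity, I would first observe that on the compact initial hypersurface $M_0$, strict 2-convexity together with compactness produces a constant $\beta>0$ such that $\lambda_1+\lambda_2\geq \beta H$ on $M_0$. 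The maximum principle argument of Huisken-Sinestrari \cite{HS3} (see also \cite{HK_surgery, BrendleHuisken}), applied to the quotient $(\lambda_1+\lambda_2)/H$, then propagates this pinching inequality with the same constant $\beta$ for all subsequent times, and the inequality $\lambda_1+\lambda_2\geq \beta H$ again survives the rescaling and the smooth limit.

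With both conditions established on $M$, Theorem~\ref{main_thm} applies and forces $M$ to be the rotationally symmetric bowl soliton. There is no substantive new obstacle here; the corollary is genuinely a direct consequence of Theorem~\ref{main_thm}, with all of the serious work absorbed into the proof of the theorem itself. The one point I would state carefully is that the smooth convergence on compact subsets used in the blowup procedure (guaranteed by the local curvature estimates of \cite{HK}, or equivalently by White's regularity theory \cite{White_size, White_nature}) is precisely what makes the two scale-invariant conditions carry over cleanly to the limit soliton $M$.
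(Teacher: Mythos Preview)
Your proposal is correct and matches the paper's intended argument: the paper presents Corollary~\ref{main_cor} as an immediate consequence of Theorem~\ref{main_thm}, relying on exactly the two ingredients you cite---$\alpha$-noncollapsing of blowup limits from \cite{andrews1,HK} and preservation of uniform 2-convexity from \cite{HS3}---both of which are scale-invariant and pass to smooth limits. There is no separate proof in the paper beyond this, so your write-up simply makes explicit what the introduction already outlines.
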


As another consequence of Theorem \ref{main_thm} we obtain a classification of the translating solitons that can arise as models for high curvature regions in the mean curvature flow with surgery. We state this in the language of the canonical neighborhood theorem \cite[Thm. 1.22]{HK_surgery}.

\begin{coro}\label{cor_surgery}
 The only translating soliton that can arise as canonical neighborhood in the mean curvature flow with surgery, is the rotationally symmetric bowl soliton.
\end{coro}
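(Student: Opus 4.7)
The plan is to deduce Corollary \ref{cor_surgery} as a direct application of Theorem \ref{main_thm}, by checking that any translating soliton that occurs as a canonical neighborhood in mean curvature flow with surgery automatically satisfies the two hypotheses of the theorem, namely $\alpha$-noncollapsedness and uniform $2$-convexity. Thus there is essentially no new analytic content; the work is in matching the definitions of the surgery framework (as used in \cite{HK_surgery}) against those used in Theorem \ref{main_thm}.

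First, I would recall that in the setup of the canonical neighborhood theorem \cite[Thm. 1.22]{HK_surgery}, the initial hypersurface is $2$-convex, and both uniform $2$-convexity (with some fixed $\beta>0$) and $\alpha$-noncollapsedness (with some fixed $\alpha>0$) are preserved throughout the flow with surgery, i.e. they hold uniformly along all smooth pieces of the evolution as well as across surgery times. Consequently, any sequence of rescalings converges (after passing to a subsequence) to an ancient limit flow that is still $\alpha$-noncollapsed and uniformly $2$-convex, since both conditions are scale-invariant and closed under smooth convergence.

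Next, I would argue that if such a canonical neighborhood limit happens to be a translating soliton, then it is a complete, embedded, oriented hypersurface satisfying equation \eqref{eq_soliton}, with positive mean curvature, and satisfies both $\alpha$-noncollapsedness and $\lambda_1+\lambda_2\geq \beta H$ pointwise, with the same constants $\alpha,\beta>0$ inherited from the surgery construction. Theorem \ref{main_thm} then applies verbatim and forces the soliton to be the rotationally symmetric bowl soliton.

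The only mildly subtle point, which is not really an obstacle but worth stating, is the bookkeeping: one must check that the convention of ``canonical neighborhood'' used in \cite[Thm. 1.22]{HK_surgery} indeed produces limits that are $\alpha$-noncollapsed in the sense of two-sided inscribed/outer radius bounds used in Theorem \ref{main_thm}, and that $\beta$ is strictly positive in the limit. Both follow from the quantitative estimates in \cite{HK,HK_surgery}, in particular the fact that the optimal noncollapsing constant is at least $1$ \cite[Cor. 1.5]{HK_rem}, so no condition is lost in the passage to the blow-up limit. With these standard verifications in place, Theorem \ref{main_thm} completes the proof.
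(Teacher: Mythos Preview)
Your proposal is correct and matches the paper's own treatment: the paper states Corollary \ref{cor_surgery} as an immediate consequence of Theorem \ref{main_thm} without writing out a separate proof, relying implicitly on exactly the fact you spell out, namely that canonical neighborhood limits from \cite[Thm.~1.22]{HK_surgery} are $\alpha$-noncollapsed and uniformly $2$-convex. Your write-up simply makes this implicit deduction explicit, so there is no divergence in approach.
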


Let us now discuss some related results. In a very important recent paper \cite{Bre1}, Brendle proved uniqueness of translating solitons for the Ricci flow
in dimension three assuming only $\kappa$-noncollapsing, as suggested by Perelman \cite{Perelman}.
In \cite{Bre2}, Brendle extended his result to higher dimensions, assuming that the soliton has positive sectional curvature and is asymptotically cylindrical.
Using similar techniques, Chodosh \cite{Chodosh} and Chodosh-Fong \cite{ChodoshFong}, obtained uniqueness results for asymptotically conical Ricci expanders.
Using ideas centering around the backwards uniqueness of the heat equation,
L.~Wang \cite{LuWang1,LuWang2} and Wang-Kotschwar \cite{WangKotschwar} proved uniqueness results for asymptotically conical / cylindrical solitons for the mean curvature flow and Ricci flow.
Using Alexandrov's reflection principle, Martin, Savas-Halilaj and Smoczyk proved a uniqueness result for translating solitons for the mean curvature flow, imposing strong asymptotic asumptions \cite{MSHS}: see Theorem B and the asymptotic asumptions in equation (3.1) on page 15.
The work \cite{CSS} by Clutterbuck-Schn\"urer-Schulze also implicitly includes uniqueness result under related assumptions.\\ 

Our proof of Theorem \ref{main_thm} follows a scheme inspired by the recent work of Brendle \cite{Bre1},
and uses some estimates for $\alpha$-noncollapsed flows from Haslhofer-Kleiner \cite{HK}.
The present article seems to be the first one, where Brendle's scheme of proof is implemented for a geometric flow other than Ricci flow.
Another feature of our proof is that we incorporate the ambient euclidean space into our set up right from the beginning; this allows us to give a quite short and efficient argument.\\

Let us now outline the main steps of our proof, pretending $n=2$ and $V=\tfrac{\partial}{\partial z}$ for ease of notation. In Section \ref{sec_asympt}, we study the asymptotic geometry of $M$.
Using some estimates from \cite{HK}, we prove that $H\sim z^{-1/2}$ and that suitable rescalings at infinity are modelled on the round shrinking cylinder $S^{1}\times\mathbb{R}$,
respectively $S^{n-1}\times\mathbb{R}$ in arbitrary dimension (Proposition \ref{prop_asympt}).
In Section \ref{sec_ell_est}, we consider the function $f_{R}=\langle R,\nu\rangle$ where $R=x_1\partial_{x_2}-x_2\partial_{x_1}$ is a rotation centered at the origin.
The function $f_R$ satisfies the same linear elliptic equation as the mean curvature $H$, and we prove a weighted estimate for it (Proposition \ref{prop_ell_est}).
In Section \ref{sec_par_est}, we prove a decay estimate for the corresponding linear parabolic equation on the round shrinking cylinder (Proposition \ref{lemma_decay}).
Finally, in Section \ref{sec_blowdown}, we carry out a blowdown and centering argument, which is the key step of our proof.
Namely, to prove rotational symmetry we would like to find a point $\bar{x}\in \mathbb{R}^2$ such that the rotation function $f_{R_{\bar{x}}}=\langle (x_1-\bar{x}_1)\partial_{x_2}-(x_2-\bar{x}_2)\partial_{x_1},\nu\rangle$ centered at $\bar{x}\in\mathbb{R}^2$ vanishes identically.
To this end, we consider the function 
\begin{equation}
 B(h):=\inf_{x\in\mathbb{R}^2}\sup_{\{z=h\}}\abs{f_{R_{x}}}=\inf_{T\in\mathbb{R}^2}\sup_{\{z=h\}}\abs{f_R-\langle T,\nu\rangle},
\end{equation}
where we observe that the infimum over different centers
can be alternatively written as an infimum subtracting off translation functions; this is important to get a better rate of decay in the estimate for the parabolic equation, c.f.
Proposition \ref{lemma_decay}.
We then fix a suitable small constant $\tau>0$, and assume towards a contradiction that $B(h)>0$ for $h$ large.
On the one hand, since $B(h)\leq O(h^{1/2})$ we can find a sequence $h_m\to\infty$ such that
\begin{equation}
 B(h_m)\leq 2\tau^{-1/2}B(\tau h_m).
\end{equation}
On the other hand, using the estimates that we just described (in particular the fact that we can find a suitable blowdown which is a shrinking cylinder, and Proposition \ref{lemma_decay} which gives a decay estimate on the cylinder) we argue that
\begin{equation}
 B(\tau h_m)\leq \tfrac14 \tau^{1/2}B(h_m)
\end{equation}
for $m$ large. This gives the desired contradiction, and concludes our outline of the proof.\\

Finally, here is some partial progress towards the general case of Conjecture \ref{conjecture}.

\begin{rema}[General case]\label{rem_generalcase}
Let $M\subset\RR^{n+1}$ be a translating soliton which is $\alpha$-noncollapsed. By the convexity estimate \cite[Thm. 1.10]{HK} the soliton must be convex. Let $k\in\{2,\ldots,n\}$ be the smallest integer such that
\begin{equation}\label{eqn_defofk}
 \inf_M\frac{\lambda_1+\ldots \lambda_k}{H}>0.
\end{equation}
Let $p_j$ be a sequence of points going to infinity such that $\tfrac{\lambda_1+\ldots+\lambda_{k-1}}{H}(p_j)\to 0$.
By the global convergence theorem \cite[Thm. 1.12]{HK} after rescaling by $H^{-1}(p_j)$ we can pass to a smooth limit, which we call the asymptotic soliton. By the strict maximum principle, this limit must split off $k-1$ lines.
Together with \eqref{eqn_defofk} and \cite[Lem. 3.14]{HK} it follows that the asymptotic soliton is a round cylinder $S^{n+1-k}\times\RR^{k-1}$.
The missing step is to use the splitting of the asymptotic soliton to obtain splitting properties for the original soliton $M$.
If we could show that $M$ splits off $k-2$ lines, then by Theorem \ref{main_thm} we could conclude that $M$ is isomteric to $\RR^{k-2}\times \textrm{Bowl}_{n-k+2}$.
\end{rema}

\section{Asymptotic geometry}\label{sec_asympt}
Throughout this article $M^n\subset \mathbb{R}^{n+1}$ denotes a translating soliton which is $\alpha$-noncollapsed and uniformly $2$-convex.
Without loss of generality, we can assume that $V=\tfrac{\partial}{\partial z}$, where $z$ denotes the last coordinate in $\mathbb{R}^{n+1}$.
We recall the soliton equation,
\begin{equation}
 H=\langle V,\nu\rangle.
\end{equation}
We can decompose $V$ into its normal part $V^\perp=\langle V,\nu\rangle \nu$ and its tangential part $\bar{V}=V-V^\perp$.
Note that
\begin{equation}\label{eq_sumofsquares}
H^{2} + |\bar{V}|^{2} = 1,
\end{equation}
in particular $H\leq 1$.
We recall the evolution equation for the mean curvature,
\begin{equation}\label{eq_evolh}
- \nabla_{\bar{V}} H = \Delta H + |A|^{2}H.
\end{equation}
Moreover, differentiating the soliton equation we obtain the identity
\begin{equation}\label{eq_dh}
\nabla H=-A(\bar{V},\cdot).
\end{equation}
By \cite[Cor. 2.15]{HK} the soliton can be written in the form $M=\partial K$, where $K$ is a convex domain (in particular connected). In fact, $K$ must be strictly convex.
Indeed, if $\lambda_1$ vanished at some point, then by the strict maximum principle $M$ would split off a line. Together with the uniform 2-convexity and \cite[Lem. 3.14]{HK} this would imply that
$M=S^{n-1}\times\mathbb{R}$, which is absurd.\\

Fix a point $p_0\in M$. Note that $M$ must be noncompact, by comparison with round spheres.

\begin{lemm}\label{lemma_htozero}
 We have $H(p)\to 0$ as $\abs{p-p_0}\to\infty$.
\end{lemm}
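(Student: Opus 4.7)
The plan is to argue by contradiction. Suppose there exist $\varepsilon>0$ and a sequence $p_j\in M$ with $|p_j-p_0|\to\infty$ and $H(p_j)\geq\varepsilon$. First I would record the elementary a priori bounds: from $H^{2}+|\bar V|^{2}=1$ one has $H\leq 1$, and convexity (nonnegativity of all $\lambda_i$) gives $|A|^{2}=\sum\lambda_i^{2}\leq(\sum\lambda_i)^{2}=H^{2}\leq 1$. Together with the $\alpha$-noncollapsed hypothesis this gives uniform bounded geometry for the translates $M_j:=M-p_j$, so by the global convergence theorem \cite[Thm.~1.12]{HK} a subsequence converges smoothly on compact subsets to a limit $M_\infty=\partial K_\infty$, which is again a convex, $\alpha$-noncollapsed, uniformly $2$-convex translating soliton with translation vector $V$, and which contains the origin with $H(0)\geq\varepsilon$.

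The crux of the argument is to produce a line split for $M_\infty$ by extracting two opposite recession directions of $K_\infty$. After a further subsequence set $p_\infty:=\lim p_j/|p_j|\in S^{n}$. Since $p_j\in K$, $|p_j|\to\infty$, and $p_j/|p_j|\to p_\infty$, the standard characterization of the recession cone of a closed convex set yields $p_\infty\in\mathrm{rec}(K)$; the recession cone is translation invariant and only enlarges under local Hausdorff limits, so $p_\infty\in\mathrm{rec}(K_\infty)$. On the other hand, in the translated picture $K_j=K-p_j$ the fixed basepoint $p_0$ becomes $p_0-p_j$, which escapes to infinity in the direction $-p_\infty$. The segment from $0$ to $p_0-p_j$ lies in $K_j$ by convexity, and passing to the limit on compact subsets one obtains the entire ray $\{-s\,p_\infty:s\geq 0\}\subset K_\infty$, so $-p_\infty\in\mathrm{rec}(K_\infty)$.

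Combining the two, $\mathrm{rec}(K_\infty)$ contains both $p_\infty$ and $-p_\infty$, and hence the full line $\mathbb{R}\,p_\infty$. Therefore $K_\infty$ is invariant under translation by $p_\infty$, and $M_\infty$ splits off a line in that direction. By uniform $2$-convexity together with \cite[Lem.~3.14]{HK}, this forces $M_\infty$ to be isometric to $S^{n-1}\times\mathbb{R}$. But the round cylinder is not a translating soliton for any $V\neq 0$: on it $\nu$ is orthogonal to the axis, so $\langle V,\nu\rangle$ varies around each cross-sphere while $H=(n-1)/r$ is constant, ruling out the identity $H=\langle V,\nu\rangle$. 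This contradicts the existence of the sequence $p_j$, proving that $H(p)\to 0$ as $|p-p_0|\to\infty$.

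The main points to verify carefully will be the convergence $M_j\to M_\infty$ and the passage of recession cones to the limit; both are essentially routine given the a priori bounds but need to be spelled out. The heart of the argument is the observation that the escaping sequence $p_j$ itself records one asymptotic direction of $K$, while the basepoint $p_0-p_j$ records the opposite direction in the translated limit; this automatically endows $K_\infty$ with a line in its recession cone, a configuration incompatible with $2$-convexity together with the translator equation.
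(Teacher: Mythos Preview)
Your argument is correct and follows essentially the same route as the paper: translate the soliton by $p_j$, pass to a smooth limit using the a priori bound $|A|\leq 1$ and noncollapsing, observe that the limit contains a line (your recession-cone argument spells this out more carefully than the paper's one-line assertion), and invoke uniform $2$-convexity together with \cite[Lem.~3.14]{HK} to identify the limit as a round cylinder. The only cosmetic difference is the final contradiction: the paper views the limit as a flow and notes that a family of shrinking cylinders cannot be eternal with bounded curvature, whereas you view the limit as a static hypersurface and note that the round cylinder does not satisfy $H=\langle V,\nu\rangle$; these are equivalent.
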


\begin{proof}
 If not, there is sequence of points $p_j\in\partial K$ going to infinity such that $\lim\inf_{j\to\infty}H(p_j)>0$.
After passing to a subsequence, we can assume that $\frac{p_j-p_0}{\abs{p_j-p_0}}$ converges to some direction $\omega\in S^n$. Recall that $\abs{H}\leq 1$.
Let $\{\hat{M}_t\}_{t\in\mathbb{R}}$ be the sequence of mean curvature flows obtained from $\{M_t=M+tV\}_{t\in\mathbb{R}}$ by shifting $p_j$ to the origin,
and pass to a subsequential limit $\{\hat{M}_t^\infty\}_{t\in\mathbb{R}}$.
Since $K$ is convex and $\frac{p_j-p_0}{\abs{p_j-p_0}}\to \omega$, the limit contains a line.
Together with the uniform 2-convexity and \cite[Lem. 3.14]{HK} this implies that $\{\hat{M}_t^\infty\}_{t\in\mathbb{R}}$ is a family of round shrinking cylinders $S^{n-1}\times\mathbb{R}$.
This contradicts the fact that $\{\hat{M}_t^\infty\}_{t\in\mathbb{R}}$ is an eternal flow with bounded curvature.
\end{proof}

By Lemma \ref{lemma_htozero} we can find a point $o\in M$, where $H$ attains its maximum. After translating coordinates we can assume that $o$ is the origin in $\mathbb{R}^{n+1}$.
By equation \eqref{eq_dh} and strict convexity the vector $\bar{V}$ must vanish at $o$. Thus, $H(o)=1$ and $T_oM=\mathbb{R}^n\subset\mathbb{R}^{n+1}$ is horizontal. In particular, $M$ is contained in the upper half plane.

\begin{lemm}\label{lemma_lowerbound}
 There exists a constant $c>0$ such that $\inf_{\{z=h\}}H\geq ch^{-1/2}$ for $h$ large.
\end{lemm}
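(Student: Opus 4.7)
I would argue by contradiction. Suppose no such $c > 0$ works; then there is a sequence $p_j \in M$ with $h_j := z(p_j) \to \infty$ and $H(p_j)\sqrt{h_j} \to 0$. By Lemma~\ref{lemma_htozero}, $H(p_j) \to 0$, so the parabolic rescaling factor $\lambda_j := H(p_j) \to 0$.

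Parabolically rescale the mean curvature flow $\{M_t = M+tV\}$ around $(p_j, 0)$ by $\lambda_j$: set $\hat M^j_s := \lambda_j(M_{s/\lambda_j^2} - p_j)$. Each $\hat M^j$ is a mean curvature flow, $\alpha$-noncollapsed, uniformly $2$-convex, weakly convex, and normalized so that $\hat H^j(0, 0) = 1$. The bound $|A| \le H \le 1$ on the original soliton gives locally uniform bounds on the rescaled curvature away from the image of the apex, and \cite[Thm.~1.12]{HK} yields a smooth subsequential limit $\hat M^\infty_s$ which is an eternal $\alpha$-noncollapsed, uniformly $2$-convex flow with $\hat H^\infty(0, 0) = 1$.

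The key step is to show that $\hat M^\infty_0$ splits off a line through the origin, since by \cite[Lem.~3.14]{HK} this would force $\hat M^\infty_s$ to be a family of round shrinking cylinders $S^{n-1}\times\RR$, contradicting eternity. By convexity of $\overline K$ and the fact that both $o = 0$ and $p_j$ lie in $\overline K$, the rescaled segment $[0,\, -\lambda_j p_j]$ is contained in $\overline{\hat K^j_0}$ and has length $\lambda_j|p_j| \ge \lambda_j h_j$, passing through the rescaled origin. After passing to a subsequence with $p_j/|p_j| \to \omega$, $\overline{\hat K^\infty_0}$ contains a ray from the origin in direction $-\omega$, provided $\lambda_j h_j \to \infty$; an opposite ray is then supplied by the asymptotic cone of $K$ (which contains the translation direction $V = \partial_z$), giving a full line.

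\textbf{The main obstacle} is that the hypothesis $H(p_j)\sqrt{h_j} \to 0$ does not, by itself, force $\lambda_j h_j = H(p_j)h_j \to \infty$; we must separately handle the regime in which $H(p_j) h_j$ stays bounded. In that regime the apex remains at bounded rescaled distance while its rescaled mean curvature diverges as $1/H(p_j) \to \infty$, causing a curvature concentration incompatible with smooth convergence to a noncollapsed $2$-convex limit; a point-picking argument—replacing $p_j$ with a point minimizing $H(p)\sqrt{z(p)}$ inside a large parabolic neighborhood of $(p_j,0)$—should reduce this case to the previous one. The sharp exponent $h^{-1/2}$ reflects the parabolic scaling of the expected asymptotic shrinking cylinder $S^{n-1}\times\RR$, whose radius is comparable to $1/H$; consistency with the height function $z$ forces $1/H \sim \sqrt{h}$.
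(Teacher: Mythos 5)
Your proposal takes a genuinely different route from the paper. The paper's proof is a short barrier argument: if $H(p)<ch^{-1/2}$ at height $h$, the $\alpha$-noncollapsing condition provides an interior tangent ball of radius $\gtrsim \alpha h^{1/2}/c$ whose center sits at height $\approx h$ (because $\nu(p)$ is almost horizontal by Lemma \ref{lemma_htozero} and the soliton equation); evolving that ball as a shrinking sphere and comparing with $K_t=K+tV\subset\{z\geq t\}$, the sphere survives inside $\{z\leq h\}$ for time $\sim h/c \gg h$, while the translating domain must vacate $\{z\leq h\}$ by time $h$ --- a contradiction with an explicit constant $c=\alpha^2/(8n)$. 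Your compactness/blowup scheme is closer in spirit to the proofs of Lemma \ref{lemma_htozero} and Proposition \ref{prop_asympt}, and it can be made to work, but as written it has a genuine gap.

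The gap is the case you flag as ``the main obstacle,'' which you do not actually close: you only assert that a point-picking argument ``should'' reduce it to the line-splitting case. Moreover, your dichotomy is set up on the wrong quantity. The line-splitting argument needs $\lambda_j|p_j|\to\infty$, not $\lambda_j h_j\to\infty$; since $|p_j|\geq h_j$, boundedness of $\lambda_j h_j$ does \emph{not} imply that ``the apex remains at bounded rescaled distance,'' so your Case (b) analysis does not cover the subcase where $\lambda_j h_j$ is bounded but $\lambda_j|p_j|\to\infty$. The clean fix is to dichotomize on $d_j:=H(p_j)\,|p_j|$: if $d_j\to\infty$, the segment $[0,-\lambda_j p_j]$ together with the recession cone of $K$ (which contains $\omega=\lim p_j/|p_j|$, giving the opposite ray $+\omega$ --- not $+\partial_z$ as you wrote) produces a line in the limit, whence a shrinking cylinder, contradicting eternity; if $d_j$ stays bounded, one iterates the local curvature estimate for $\alpha$-noncollapsed flows \cite[Thm.~1.8]{HK} along a chain of balls of rescaled length $d_j$ to get $1=H(o)\leq C(d_j)\,H(p_j)\to 0$, a direct contradiction with no point-picking needed. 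Separately, your claim that ``$|A|\leq H\leq 1$ gives locally uniform bounds on the rescaled curvature'' is backwards: rescaling by $\lambda_j^{-1}$ turns $|A|\leq 1$ into $|\hat A|\leq \lambda_j^{-1}\to\infty$; the local bounds needed for \cite[Thm.~1.12]{HK} come from the noncollapsing curvature estimate, not from the global bound on the soliton. Finally, note that your argument never uses the precise hypothesis $H(p_j)\sqrt{h_j}\to 0$ beyond $H(p_j)\to 0$ together with $h_j\to\infty$, which is a hint that the comparison argument of the paper, which sees the exponent $1/2$ directly through the parabolic scaling of the inscribed ball, is the more economical route.
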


\begin{proof}
Choose $c=\alpha^2/(8n)$. Assume that for some large $h$ there is a point $p\in \{z=h\}$ with $H(p)< ch^{-1/2}$.
By the $\alpha$-noncollapsing condition we can find an interior ball of radius at least $\tfrac{\alpha h^{1/2}}{c}$ tangent at $p$.
By Lemma \ref{lemma_htozero} and the soliton equation the vector $\nu(p)$ is almost horizontal. Thus, it takes time at least
\begin{equation}
 T=\frac12\frac{\alpha^2}{2nc}h= 2h
\end{equation}
until the interior ball leaves the halfspace $\{z\leq h\}$.
On the other hand, since $M$ is contained in the upper half plane and $\{M_t=M+tV\}_{t\in\mathbb{R}}$ moves in $z$-direction with unit speed, the ball must leave the halfspace $\{z\leq h\}$ in time at most $h$; this is a contradiction.
\end{proof}

\begin{prop}\label{prop_asympt}
For any sequence $h_m\to\infty$, the sequence of flows $\{\hat{M}^m_t\}_{t\in(-\infty,1)}$ obtained by translating $p_m=(0,h_m)\in\mathbb{R}^{n+1}$ to the origin and parabolically rescaling by $\lambda_m=h_m^{-1/2}$, i.e.
\begin{equation}
\hat{M}^m_t=\lambda_m\cdot(M_{\lambda_m^{-2} t}-p_m),
\end{equation}
converges to a family of round shrinking cylinders
\begin{equation}
C_t=S^{n-1}_{r(t)}\times\mathbb{R},
\end{equation}
 where $r(t)=\sqrt{2(n-1)(1-t)}$.
\end{prop}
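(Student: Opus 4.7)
The plan is a compactness, classification, and geometric identification argument; the main technical obstacle is the compactness step.

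For compactness, both $\alpha$-noncollapsing and uniform $2$-convexity are scale invariant. The ambient basepoint $p_m = (0, h_m)$ lies in $K^\circ$ rather than on $M$, so I pass to a genuine reference point $q_m \in M \cap \{z = h_m\}$; Lemma \ref{lemma_lowerbound} gives $H(q_m) \in [c\,h_m^{-1/2}, 1]$. Rescaling around $q_m$ at the natural curvature scale $H(q_m)$ normalizes mean curvature to $1$ at the basepoint, and the global convergence theorem for $\alpha$-noncollapsed flows \cite[Thm.~1.12]{HK} yields a smooth subsequential limit. This natural-scale limit is a cylinder (by the classification below), so it supplies the sharp upper bound $H \leq C h^{-1/2}$ on slices with $z \sim h_m$; combined with $|A| \leq CH$ from noncollapsing, this gives uniform bounds on $\{\hat{M}^m_t\}$ over compact subsets of $\RR^{n+1} \times (-\infty, 1)$, hence smooth subsequential convergence at the proposition's scale.

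For classification, any subsequential limit $\hat{M}^\infty_t$ is convex by \cite[Thm.~1.10]{HK}, $\alpha$-noncollapsed, uniformly $2$-convex, and ancient. Since $e_{n+1}$ lies in the recession cone of $K$, the $z$-axis above $o$ is contained in $K$; its image under the rescaling extends to $\pm\infty$, so the limit convex region contains a full vertical line. By \cite[Lem.~3.14]{HK}, $\hat{M}^\infty_t$ is then a family of round shrinking cylinders $S^{n-1}_{r(t)}\times\RR$ with vertical axis, and under MCF the radius satisfies $r(t)^2 = r_0^2 - 2(n-1)t$.

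To determine the axis and the radius, the explicit formula $\hat{M}^m_t = \lambda_m M + (0, h_m^{1/2}(t-1))$ shows that the rescaled cross-section at $\{z=0\}$ and time $t$ equals $h_m^{-1/2}\partial K_{h_m(1-t)}$. The substitution $h = h_m(1-t)$ rewrites this as $\sqrt{1-t}\cdot h^{-1/2}\partial K_h$, so for fixed $t<1$ the $m \to \infty$ limit (along the classification's subsequence) is $\sqrt{1-t}\cdot S^{n-1}_{r_0}$, giving $r(t) = r_0\sqrt{1-t}$. Comparing with $r(t)^2 = r_0^2 - 2(n-1)t$ forces $r_0^2 = 2(n-1)$, so $r(t) = \sqrt{2(n-1)(1-t)}$; as this value of $r_0$ is independent of subsequence, the full sequence converges. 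The same scaling factor $\sqrt{1-t} \to 0$ also shrinks the cross-section to the origin as $t \to 1^-$, pinning the cylinder axis to the $z$-axis.
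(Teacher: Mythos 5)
Your skeleton matches the paper's (blow down, compactness, classify the limit via the vertical line and \cite[Lem.~3.14]{HK}, then pin down the radius), but both quantitative hinges of your argument have gaps. First, the compactness step: you route through the curvature scale $H(q_m)$ and assert that the curvature-scale limit being a cylinder ``supplies the sharp upper bound $H\leq Ch^{-1/2}$.'' It does not, at least not without a further argument. The cylinder at scale $H(q_m)^{-1}$ has radius $(n-1)/H(q_m)$, and every easy comparison (enclosed shrinking spheres versus the unit upward translation speed, or Lemma \ref{lemma_lowerbound} itself) bounds that radius from \emph{above} by $Ch_m^{1/2}$, i.e.\ reproves the lower bound $H\geq ch^{-1/2}$. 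The upper bound on $H$ is equivalent to a \emph{lower} bound on the radius, which needs something extra (e.g.\ comparing the cylinder's extinction time $(n-1)/(2H(q_m)^2)$ with the convexity inequality $K\cap\{z=\lambda h\}\supset \lambda(K\cap\{z=h\})+(1-\lambda)\{o\}$, or a growth ODE for the radius function). The paper sidesteps this entirely: \cite[Thm.~1.14]{HK} gives subsequential convergence of the convex domains $\hat{K}^m_t$ with no a priori curvature bound, and Lemma \ref{lemma_lowerbound} is used only to rule out the degenerate limits (halfspace, all of $\RR^{n+1}$); the upper bound $H\leq Ch^{-1/2}$ is a \emph{consequence} of the proposition, not an input.

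Second, your determination of $r_0$ is circular. The flow convergence along your subsequence gives $h_m^{-1/2}(M\cap\{z=h_m\})\to S^{n-1}_{r_0}$, i.e.\ convergence of the blowdown cross-sections along the heights $h_m$. Your substitution $h=h_m(1-t)$ then requires $h^{-1/2}(M\cap\{z=h\})\to S^{n-1}_{r_0}$ along the \emph{different} sequence of heights $h_m(1-t)$; asserting that this limit is the same sphere $S^{n-1}_{r_0}$ is exactly the subsequence-independence of the blowdown that you are trying to prove. (What the flow convergence actually gives along $h=h_m(1-t)$ is the radius $r(t)/\sqrt{1-t}$, so the identity $r(t)=r_0\sqrt{1-t}$ is assumed, not derived.) The paper's fix is a one-line observation: $\hat{K}^m_{1+\varepsilon}\subset\{z\geq \varepsilon h_m^{1/2}\}$, so the limit is empty for every $t>1$, while it contains the origin for every $t<1$; hence the cylinder becomes extinct exactly at $t=1$, which forces $r(t)=\sqrt{2(n-1)(1-t)}$ and, by uniqueness of the limit, convergence of the full sequence.
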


\begin{proof}
 Write $M=\partial K$ as before, and let $\hat{K}^m_t=\lambda_m\cdot(K_{\lambda_m^{-2} t}-p_m)$. Note that $\hat{K}^m_t$ contains the origin for all $t<1$. By \cite[Thm. 1.14]{HK} we can thus find a subsequence that converges smoothly to a limit
$\{\hat{K}^\infty_t\}_{t\in(-\infty,1)}$, whose time slices are convex and nonempty. By Lemma \ref{lemma_lowerbound} the limit is nontrivial, i.e. $\hat{K}^\infty_t\neq \RR^{n+1}$ and $\partial \hat{K}^\infty_t$ has strictly positive mean curvature for all $t<1$ .
Note that the vertical line through the origin is contained in $\hat{K}^\infty_t$ for all $t<1$. Thus,
by the uniform 2-convexity and \cite[Lem. 3.14]{HK} the flow $\{\hat{K}^\infty_t\}_{t\in(-\infty,1)}$ must be a family of round shrinking cylinders $\{C_t=S^{n-1}_{r(t)}\times\mathbb{R}\}_{t\in(-\infty,1)}$.
Since $\hat{K}^m_{1+\varepsilon}\subset \{z\geq \varepsilon h_m^{1/2}\}$ for every $\varepsilon>0$, the cylinders become extinct at time $T=1$, and thus their radius is given by the formula $r(t)=\sqrt{2(n-1)(1-t)}$.
Finally, by uniqueness of the limit the subsequential convergence is actually convergence of the full sequence.
\end{proof}

\begin{coro}
For $z\to\infty$, the mean curvature satisfies the estimate
\begin{equation}
H=(\tfrac{n-1}{2})^{1/2}z^{-1/2}+o(z^{-1/2}).
\end{equation}
\end{coro}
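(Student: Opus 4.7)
The plan is to extract the mean curvature asymptotics directly from the cylindrical blowdown of Proposition \ref{prop_asympt}. Given an arbitrary sequence $p_m \in M$ with $h_m := z(p_m) \to \infty$, I would apply Proposition \ref{prop_asympt} with this sequence of heights. The guiding observation is that $H$ is a curvature quantity that scales like length$^{-1}$, while $z$ scales like length; so the combination $z^{1/2} H$ is scale-invariant, and the limit value at height 0 on a cylinder of radius $r(0) = \sqrt{2(n-1)}$ equals $(n-1)/r(0) = \sqrt{(n-1)/2}$.

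Set $\lambda_m = h_m^{-1/2}$ and write $p_m = (x_m, h_m)$ with $x_m \in \RR^n$. Under the parabolic rescaling and translation of Proposition \ref{prop_asympt}, $p_m$ is mapped to
\begin{equation*}
\hat{p}_m := \lambda_m (p_m - (0,h_m)) = (h_m^{-1/2} x_m,\, 0) \in \hat{M}^m_0.
\end{equation*}
The naturality of the mean curvature under rescaling gives $\hat{H}^m(\hat{p}_m) = \lambda_m^{-1} H(p_m) = h_m^{1/2} H(p_m)$, so the corollary will follow once we know $\hat{H}^m(\hat{p}_m) \to \sqrt{(n-1)/2}$.

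The step that requires a little care is ensuring $\hat{p}_m$ stays in a compact set so that the smooth convergence $\hat{M}^m_0 \to C_0$ can be applied at $\hat{p}_m$. This is where I would use convexity. Since $\hat{K}^m_0$ is convex and contains the origin (as noted in the proof of Proposition \ref{prop_asympt}), and since $\hat{K}^m_0 \to C_0 = \bar{B}^n_{r(0)} \times \RR$ in $C^\infty_{\mathrm{loc}}$, one argues by contradiction: if a subsequence had $|\hat{p}_m| \to \infty$, then by convexity the segment from the origin to $\hat{p}_m$ (which stays at height $z=0$) would intersect any sphere $\{|x| = r(0)+1\}$ at $z=0$, contradicting the Hausdorff convergence of $\hat{K}^m_0 \cap \bar{B}_R$ to $C_0 \cap \bar{B}_R$ for $R = 2r(0)$. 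Hence $|\hat{p}_m| \leq r(0) + o(1)$, and a subsequence converges to some $\hat{p}_\infty \in \partial C_0 = S^{n-1}_{r(0)} \times \RR$.

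By the smooth convergence on compact sets from Proposition \ref{prop_asympt}, $\hat{H}^m(\hat{p}_m)$ converges to the mean curvature of $C_0$ at $\hat{p}_\infty$, which equals $(n-1)/r(0) = \sqrt{(n-1)/2}$ independently of $\hat{p}_\infty$. The limit being independent of the subsequence forces convergence of the whole sequence, and arbitrariness of $p_m$ then yields $z(p)^{1/2} H(p) \to \sqrt{(n-1)/2}$ as $z(p) \to \infty$, which is precisely the claimed expansion.
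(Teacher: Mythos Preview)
Your argument is correct and is precisely the derivation the paper has in mind: the corollary is stated without proof, as an immediate consequence of Proposition~\ref{prop_asympt}, and you have supplied the natural details. In particular, the only point requiring care---that an arbitrary point $p_m\in M\cap\{z=h_m\}$ lands in a bounded region under the rescaling so that the smooth convergence applies at $\hat p_m$---is handled cleanly by your convexity argument, using that the origin lies in $\hat K^m_0$ (as asserted in the proof of Proposition~\ref{prop_asympt}) together with the local Hausdorff convergence to the solid cylinder of radius $r(0)$ centered on the $z$-axis.
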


\section{Weighted estimate for the rotation functions}\label{sec_ell_est}

For any translation vector field $T$ and any rotation vector field $R$ in $\mathbb{R}^{n+1}$, we consider the functions $f_T=\bangle{T,\nu}$ and $f_R=\bangle{R,\nu}$, respectively.
Since mean curvature flow is invariant under isometries of the ambient space, these functions satisfy the elliptic equation
\begin{equation}\label{eq_elliptic}
 (\Delta_z+\abs{A}^2)f=0,
\end{equation}
where $\Delta_z=\Delta+\nabla_{\bar{V}}$ is the drift Laplacian. For example, in the case $T=V$ we have $f_V=\bangle{V,\nu}=H$,
which shows that \eqref{eq_elliptic} then reduces to \eqref{eq_evolh} and moreover gives the asymptotics $f_V\sim z^{-1/2}$.

\begin{prop}\label{prop_ell_est}
 For all $h>0$, we have the weighted estimate
\begin{equation}
 \sup_{\{z\leq h\}}\abs{\frac{f_R}{H}}\leq \sup_{\{z= h\}}\abs{\frac{f_R}{H}}.
\end{equation}
\end{prop}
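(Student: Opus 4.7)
The plan is to apply the maximum principle to the quotient $q := f_R/H$ on the compact sublevel region $\Omega_h := M \cap \{z \leq h\}$. Since $M$ is strictly convex we have $H>0$ everywhere (as noted after \eqref{eq_dh}), so $q$ is a smooth function on $M$. Both $f_R$ and $H=f_V$ satisfy \eqref{eq_elliptic}, and expanding the Leibniz rule in the identity $(\Delta_z+|A|^2)(qH)=0$ gives
$$H\,\Delta_z q + 2\,\nabla q\cdot\nabla H + q\,(\Delta_z H + |A|^2 H) = 0.$$
The last term vanishes since $H$ solves \eqref{eq_elliptic}, so after dividing by $H$ the quotient satisfies the linear elliptic equation
$$\Delta_z q + \frac{2\,\nabla H}{H}\cdot \nabla q = 0,$$
which crucially has \emph{no zeroth-order term}. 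This is the one substantive observation of the proof: using the positive solution $H$ as a Harnack weight eliminates the $|A|^2$ term that would otherwise obstruct a direct maximum principle.

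Next I would verify that $\Omega_h$ is compact. Since $\nu_z=H>0$ strictly, the convex body $K$ with $M=\partial K$ is the epigraph of a strictly convex function on $\mathbb{R}^n$, minimized at the origin $o$. If $\Omega_h$ were unbounded then convexity would force the recession cone of $K\cap\{z\leq h\}$ to contain a horizontal direction $e$, so the ray $\{o+te:t\geq 0\}$ would lie in the supporting hyperplane $T_oM$ and in $K$, contradicting strict convexity of $K$ (which forces $T_oM\cap K=\{o\}$). Alternatively, Proposition \ref{prop_asympt} yields the quantitative estimate $\diam(M\cap\{z=h'\})=O(h'^{1/2})$, which also rules out unboundedness. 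Moreover, since $\bar V=0$ only at $o$ (where $H=1$ and $T_oM$ is horizontal), the level set $\{z=h\}\cap M$ is a smooth codimension-one submanifold for every $h>0$, so $\Omega_h$ is a compact smooth manifold with boundary.

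The final step is a routine application of the maximum principle on the compact domain $\Omega_h$. The drift coefficient $2\nabla H/H$ is smooth and bounded on $\Omega_h$ (since $H$ is smooth and bounded away from zero by continuity on the compact set), and there is no zeroth-order term, so the strong maximum principle applies simultaneously to $q$ and $-q$, yielding
$$\sup_{\Omega_h} q = \sup_{\partial\Omega_h} q, \qquad \inf_{\Omega_h} q = \inf_{\partial\Omega_h} q,$$
where $\partial\Omega_h = M\cap\{z=h\}$. Combining these two bounds gives $\sup_{\Omega_h}|q|\leq \sup_{\partial\Omega_h}|q|$, which is precisely the claimed weighted estimate.

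The argument is short: the main point is the quotient trick, and the only technical issue is ensuring that $\Omega_h$ is compact, which is where strict convexity (or the asymptotics from Section \ref{sec_asympt}) comes in. Once those ingredients are in place, the rest is textbook.
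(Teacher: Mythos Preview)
Your proof is correct and rests on the same idea as the paper: since $H>0$ solves the same equation \eqref{eq_elliptic}, one can use it to remove the zeroth-order term and then invoke the maximum principle on the compact region $\{z\leq h\}$. The paper packages this as a barrier argument (pick the smallest $\theta\geq 0$ with $u:=\theta H-f_R\geq 0$, note that $u\geq 0$ forces $\Delta_z u=-|A|^2 u\leq 0$, so the zero minimum of $u$ lies on $\{z=h\}$), whereas you derive the drift equation for the quotient $q=f_R/H$ directly; these are two standard presentations of the same ``positive-solution gauge'' trick, and neither buys anything the other doesn't.
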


\begin{proof}
Pick the smallest $\theta\geq 0$ such that $u:=\theta H-f_R\geq 0$ in $\{z\leq h\}$. If $\theta=0$, then $f_R\leq 0$. Assume now $\theta>0$. Since
\begin{equation}
 (\Delta_z+\abs{A}^2)u=0,
\end{equation}
the minimum of $u$ in $\{z\leq h\}$ is attainded at a point $p\in \{z= h\}$. By minimality of $\theta$, the minimum must be zero.
Thus,
\begin{equation}
 \frac{f_R}{H}\leq \theta=\frac{f_R}{H}|_p\leq \sup_{\{z=h\}}\abs{\frac{f_R}{H}}.
\end{equation}
Repeating the same argument with $f_R$ replaced by $-f_R$, this proves the proposition.
\end{proof}

\section{Decay estimate on the cylinder}\label{sec_par_est}

\begin{prop}\label{lemma_decay}
 Consider the family of shrinking cylinders $\{C_t=S^{n-1}_{r(t)}\times \mathbb{R}\subset \mathbb{R}^{n+1}\}_{t\in(0,1)}$, where $r(t)=\sqrt{2(n-1)(1-t)}$.
Let $f=\{f(t)\}_{t\in(0,1)}$ be a family of functions on $C_t$ satisfying the parabolic equation
\begin{equation}
  \partial_t {f}=(\Delta_{C_t}+\abs{A_{C_t}}^2) f.
\end{equation}
Suppose that $f$ is invariant under translations along the axis of the cylinder, that
\begin{equation}\label{decay_assump_average}
\int_{S^{n-1}_{r(t)}\times[-1,1]}f(t)=0
\end{equation}
for all $t\in(0,1)$, and that $\abs{f(t)}\leq 1$ for $t\in(0,1/2)$.
Then
\begin{equation}
 \inf_{T\in\mathbb{R}^n} \sup_{C_t}|f(t)-f_{T}|\leq D(1-t)^{\tfrac{1}{2}+\tfrac{1}{n-1}}
\end{equation}
for all $t\in[1/2,1)$, where $D<\infty$ is a constant.
\end{prop}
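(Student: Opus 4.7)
The plan is a spectral decomposition on the cross-sectional sphere. Since $f$ is translation-invariant along the axis, I view $f(t)$ as a function on $S^{n-1}$ (parametrize $C_t$ by $(r(t)\omega, s)$ with $\omega\in S^{n-1}$, $s\in\RR$). Then $\Delta_{C_t}$ applied to such a function is $r(t)^{-2}\Delta_{S^{n-1}}$ and $\abs{A_{C_t}}^2=(n-1)/r(t)^{-2}$, so the equation reads
\begin{equation}
\partial_t f = \tfrac{1}{r(t)^2}\bigl(\Delta_{S^{n-1}}+(n-1)\bigr)f.
\end{equation}
Introducing the time $\tau=\tau(t)$ defined by $d\tau/dt=r(t)^{-2}=1/(2(n-1)(1-t))$, this becomes the autonomous equation $\partial_\tau f=(\Delta_{S^{n-1}}+(n-1))f$, and $\tau\to\infty$ as $t\to 1$ with $\tau(t)-\tau(t_0)=\tfrac{1}{2(n-1)}\log\tfrac{1-t_0}{1-t}$.

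Next I expand $f(\tau,\omega)=\sum_{k\ge 0} f^{(k)}(\tau,\omega)$ in spherical harmonics on $S^{n-1}$, where $\Delta_{S^{n-1}}$ has eigenvalue $-k(k+n-2)$ on degree-$k$ harmonics. The operator $\Delta_{S^{n-1}}+(n-1)$ then has eigenvalues
\begin{equation}
\lambda_k=(n-1)-k(k+n-2), \qquad \lambda_0=n-1,\ \lambda_1=0,\ \lambda_k\le -(n+1)\ \text{for }k\ge 2.
\end{equation}
The $k=0$ mode is the spherical average, which by the hypothesis \eqref{decay_assump_average} (and translation-invariance reducing the integral to a multiple of the spherical average) vanishes for every $t\in(0,1)$. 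The $k=1$ modes are exactly $\omega\mapsto\langle T,\omega\rangle$ with $T\in\RR^n$, which on the cylinder coincide with $f_T=\langle T,\nu\rangle$ for horizontal $T$; since $\lambda_1=0$, this component is constant in $\tau$ (equivalently in $t$), so for a suitable fixed $T\in\RR^n$ the function $g:=f-f_T$ has zero $k=0$ and $k=1$ components for every $t\in(0,1)$.

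The remaining function $g(\tau)$ lies in the spectral subspace $k\ge 2$, where $\lambda_k\le-(n+1)$. Thus $\|g(\tau)\|_{L^2(S^{n-1})}\le e^{-(n+1)(\tau-\tau_0)}\|g(\tau_0)\|_{L^2(S^{n-1})}$ for $\tau\ge\tau_0$. Taking $\tau_0=\tau(1/2)$ and using the assumption $\abs{f(t)}\le 1$ on $(0,1/2)$ (which bounds $\|g(\tau_0)\|_{L^2}$ by a universal constant, since the projections $f^{(0)}$ and $f^{(1)}$ are $L^2$-continuous), I get
\begin{equation}
\|g(t)\|_{L^2(S^{n-1})}\le C\,e^{-(n+1)(\tau(t)-\tau(1/2))}=C'(1-t)^{(n+1)/(2(n-1))}=C'(1-t)^{\tfrac12+\tfrac{1}{n-1}}
\end{equation}
for $t\in[1/2,1)$.

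The final step is upgrading this $L^2$ bound to an $L^\infty$ bound. This is routine since $g$ lives in the $k\ge 2$ spectral subspace: each mode decays at rate at least $e^{-(n+1)(\tau-\tau_0)}$, and the higher modes decay strictly faster, so by Sobolev embedding on $S^{n-1}$ (bounding $L^\infty$ by $H^s$ for $s>(n-1)/2$) combined with the smoothing of the heat semigroup over a short fixed interval in $\tau$, I get $\|g(t)\|_{L^\infty(S^{n-1})}\le D(1-t)^{1/2+1/(n-1)}$. Since $g=f-f_T$ and $g$ is axis-invariant, this is the desired estimate on $C_t$. The only mildly delicate point is this last $L^2\to L^\infty$ step, which has to be applied after a small time lapse so that the semigroup has smoothed; I would carry this out by writing $g(\tau)=e^{(\tau-\tau_0-1)(\Delta+(n-1))}g(\tau_0+1)$ and using that $(\Delta_{S^{n-1}}+(n-1))$ restricted to the $k\ge 2$ subspace generates a bounded operator from $L^2$ to any $C^m$ after unit time.
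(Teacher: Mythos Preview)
Your proof is correct and follows essentially the same approach as the paper: reduce to functions on the round sphere, perform a spectral decomposition, kill the constant mode via the mean-zero hypothesis and the degree-one modes via the subtraction of $f_T$, and observe that the slowest surviving mode is degree two, giving the rate $(1-t)^{1/2+1/(n-1)}$. The paper solves the coefficient ODEs directly in $t$ rather than passing to the logarithmic time $\tau$, and is much terser at the end (it simply writes ``The assertion follows'' after displaying $\gamma_2(t)$); your explicit identification of the $k=1$ harmonics with the functions $f_T=\langle T,\nu\rangle$ and your $L^2\to L^\infty$ smoothing step spell out exactly the details the paper leaves implicit.
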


\begin{proof} Since the family of functions $f$ is invariant under translations, the Laplacian scales like one over distance squared, and $\abs{A_{C_t}}^2=\frac{n-1}{r(t)^2}$,
we can identify $f$ with a family of functions  $\tilde f=\{\tilde f(t)\}_{t\in(0,1)}$ on the unit sphere $S^{n-1}$ satisfying the parabolic equation
\begin{align}
\partial_t\tilde f = \frac{1}{{2(n-1)(1-t)}} \left( \Delta_{\SS^{n-1}} + n-1 \right)\tilde f.
\end{align}
The other assumptions then read $\int_{S^{n-1}}\tilde{f}(t)=0$ for all $t\in(0,1)$, and $|{\tilde{f}(t)}|\leq 1$ for $t\in(0,1/2)$.

The eigenvalues of $-\Delta_{\SS^{n-1}}$ are $\lambda_\ell =\ell(\ell + n-2)$, so $\lambda_{0} =0,\lambda_{1} = n-1, \lambda_{2} = 2n,\dots$.
If we write $\tilde f = \sum \gamma_{j}(t)\varphi_{j}(x)$, then
\begin{equation}
\frac{d}{dt} \gamma_{j} = \frac{1}{{2(n-1)(1-t)}} (-\lambda_{j}+n-1)\gamma_j,
\end{equation}
which has the solution
\begin{equation}
\gamma_{j}(t) = \gamma_{j}(0)(1-t)^{\frac{\lambda_{j}-n+1}{2(n-1)}}.
\end{equation}
In particular,
\begin{equation}
\gamma_{2}(t) = \gamma_{2}(0) (1-t)^{\frac 12 + \frac{1}{n-1}}.
\end{equation}
The assertion follows.
\end{proof}

\section{Blowdown analysis}\label{sec_blowdown}

Let $G(x_1,\ldots,x_{n+1}):=x_1\partial_{x_2}-x_2\partial_{x_1}$, and let $\mathcal{R}$ be the set of all $A_\ast G$, where $A\in SO_n\subset SO_{n+1}$.
For an arbitrary center $\bar{x} \in \RR^{n}\subset\RR^{n+1}$, we define $\mathcal{R}_{\bar{x}}:=\{R(\cdot-\bar{x})|R\in \mathcal{R}\}$.
Consider the function
\begin{equation}
B(h) : = \inf_{x\in\RR^{n}}\sup_{R\in\mathcal{R}_x}\sup_{\{z=h\}}|f_{R}|.
\end{equation}
Note that
\begin{equation}
 G_{\bar{x}}(x):=G(x-\bar{x})=(x_1-\bar{x}_1)\partial_{x_2}-(x_2-\bar{x}_2)\partial_{x_1}=G(x)-(\bar{x_1}\partial_{x_2}-\bar{x_2}\partial_{x_1}).
\end{equation}
More generally, if we let $R_{\bar{x}}(x):=R(x-\bar{x})$ and $A_{\bar{x}}(x):=A(x-\bar{x})+\bar{x}$ for  $R\in\mathcal{R}$ and $A\in SO_n$,
then
\begin{equation}\label{eq_rot_trans}
 A_{\bar{x}}G_{\bar{x}}=AG-T_{A,\bar{x}},
\end{equation}
where $T_{A,\bar{x}}$ is translation along the vector $A\bar{x}+AS\bar{x}-\bar{x}$ with $S\bar{x}=(-\bar{x}_2,\bar{x}_1,0,\ldots,0)^t$. For a dense set of $A\in SO_n$, the matrix
$A(I+S)-I$ is invertible. We conclude that
\begin{align}
B(h) = \inf_{T\in\RR^n}\sup_{R\in\mathcal{R}} \sup_{\{z=h\}}|f_{R}-f_{T}|.
\end{align}

\noindent\underline{Case 1:} Suppose there is a sequence $h_{m}\to \infty$ with $B(h_{m}) = 0$. For each $m$, choose $x_m$ such that
\begin{equation}
 \sup_{\{z=h_m\}}|f_{R_{x_m}}|=0
\end{equation}
for every $R_{x_{m}}\in \mathcal{R}_{x_m}$. Proposition \ref{prop_ell_est} implies that $f_{R_{x_m}}=0$ in the region $\{z\leq h_m\}$.
By equation \eqref{eq_rot_trans}, we can write $f_{R_{x_m}} = f_{R} - f_{T_m}$ for some $R\in \mathcal{R}$ and some translation $T_m$ (depending on $R_{x_m}$ and $x_m$).
Then, $f_{R} - f_{T_m}=0$ in the region $\{z\leq h_m\}$. Thus, $x_m$ is constant and $f_{R_{x_m}}=0$ everywhere for every $R_{x_{m}}\in \mathcal{R}_{x_m}$, and hence we have proven rotational symmetry.\\

\noindent\underline{Case 2:} Suppose now that $B(h) > 0$ for $h$ large. Fix $\tau \in (0,\tfrac{1}{2})$ such that
$\tau^{-\tfrac{1}{n-1}}>8D$, where $D$ is the constant from Proposition \ref{lemma_decay}.
Since $B(h) \leq O(h^{1/2})$,\footnote{Using the maximum principle, we can prove that the tensor $S = (n-1)A- H g +Hdz\otimes dz$ satisfies
 $\abs{S}\leq O(h^{-1+\varepsilon})$ for any $\varepsilon>0$, which implies the better estimate $B(h) \leq O(h^{\varepsilon})$.
However, the rough estimate $B(h) \leq O(h^{1/2})$, which follows from being asymptotically cylindrical, turns out to be good enough for our purpose.} we can then find $h_{m}\to\infty$ such that
\begin{equation}
B(h_{m}) \leq 2 \tau^{-1/2} B(\tau h_{m}).
\end{equation}
Choose $x_{m}$ such that 
\begin{equation}
\sup_{R\in{\mathcal{R}_{{x_m}}}}\sup_{\{z=h_m\}}|f_{R}| = B(h_{m}).
\end{equation}
Choose $R_m\in\mathcal{R}_{x_m}$ such that
\begin{equation}
B(\tau h_m)=\inf_T\sup_{\{z\in \tau h_m\}}\abs{f_{R_m}-f_T}.
\end{equation}
The function $\tilde f_{m} : = \frac{1}{B(h_{m})}f_{R_m}$ satisfies the elliptic equation
\begin{equation}\label{ellip_fm}
 (\Delta_z+\abs{A}^2)\tilde f_m=0.
\end{equation}
Applying Proposition \ref{prop_ell_est} and using the asymptotics for $H$, we obtain
\begin{equation}\label{bounds_fm}
 \sup_{\{z=h\}}\abs{\tilde f_m}\leq 2 \left(\frac{h_m}{h}\right)^{1/2}
\end{equation}
for $h\leq h_m$ and $m$ large.\\

Recall that the family
$\{M_t=M_0+tV\}_{t\in\RR}$ moves by mean curvature flow.
If we view $\tilde{f}_m$ as a one parameter family of functions on $M_t$, then the drift term in $\Delta_z=\Delta+\nabla_{\bar{V}}$ becomes a time derivative, and equation \eqref{ellip_fm} takes the form
\begin{equation}
 \partial_t \tilde{f}_m=(\Delta+\abs{A}^2)\tilde{f}_m.
\end{equation}
Let $p_m=(0,h_m)\in\mathbb{R}^{n+1}$.
Consider the parabolic rescaling $(x,t)\mapsto (\lambda_m(x-p_m),\lambda_m^2t)$, where $\lambda_m=h_m^{-1/2}$. In other words, let
\begin{equation}
\hat{M}_t^m=\lambda_m (M_{\lambda_m^{-2}t}-p_m)
\end{equation}
and
\begin{equation}
\hat{f}_m(x,t)=\tilde{f}_m(\lambda_m^{-1}x+p_m,\lambda_m^{-2}t)
\end{equation}
for $x\in \hat{M}_t^m$. Note that $\hat{M}_t^m$ moves by mean curvature flow and that $\hat{f}_m$ satisfies the parabolic equation
\begin{equation}
\partial_t \hat{f}_m= (\Delta+\abs{A}^2)\hat{f}_m.
\end{equation}
By Proposition \ref{prop_asympt}, for $m\to \infty$ the mean curvature flows $\hat{M}_t^m$ converge to the family of shrinking cylinders
\begin{equation}
C_t=S^{n-1}_{r(t)}\times \mathbb{R},
\end{equation}
where $r(t)=\sqrt{2(n-1)(1-t)}$. Using the estimate \eqref{bounds_fm} we obtain
\begin{equation}
\limsup_{m\to\infty} \sup_{t\in[\delta,1-\delta]}\sup_{\abs{z}\leq \delta^{-1}}\abs{\hat{f}_m}<\infty
\end{equation}
for any given $\delta\in(0,1/2)$.
Hence, the functions $\hat{f}_m$ converge (subsequentially) to a family of functions $f=\{f(t)\}_{t\in(0,1)}$ on $C_t$ that satisfy
\begin{equation}
 \partial_t f=(\Delta_{C_t}+\abs{A_{C_t}}^2) f.
\end{equation}
The limit $f$ is invariant under translations and the estimate \eqref{bounds_fm} implies that $\abs{f(t)}\leq 4$ for $t\in(0,\tfrac12)$.
Moreover, since $\textrm{div}_{\mathbb{R}^{n+1}} R_{m}=0$ and $\bangle{R_{m},V}=0$, the divergence theorem yields that
\begin{equation}
 \int_{\{h_1\leq z\leq h_2\}}\bangle{R_{m},\nu}=0
\end{equation}
on $M$ for any $h_1<h_2$. Thus, $f$ also satisfies assumption \eqref{decay_assump_average}.
Hence, Proposition \ref{lemma_decay} implies that
\begin{equation}\label{eq_contr1}
 \inf_{T} \sup_{C_t}|f(t)-f_{T}|\leq 4D(1-t)^{\tfrac{1}{2}+\tfrac{1}{n-1}}
\end{equation}
for all $t\in [1/2,1)$.
On the other hand, we have
\begin{align}
 \inf_{T} \sup_{\{z=0\}} | \hat{f}_m(1-\tau)-f_T|
& =\inf_{T} \sup_{\{z = h_{m}\}} | \tilde{f}_m(h_m(1-\tau))-f_T| 
 =\inf_{T} \sup_{\{z = \tau h_{m}\}} | \tilde{f}_m(0)-f_T| \\
& = \frac{1}{B(h_{m})} \inf_{T} \sup_{\{z =  \tau h_{m}\}} |f_{R_{m}}-f_T| 
 = \frac{B(\tau h_{m})}{B(h_{m})} 
 \geq \frac 12 \tau^{1/2}.\nonumber
\end{align}
Taking the limit as $m\to\infty$ gives
\begin{equation}\label{eq_contr2}
 \inf_{T} \sup_{C_{1-\tau}}|f(1-\tau)-f_{T}|\geq \frac12 \tau^{1/2}.
\end{equation}
Since $\tau^{-\tfrac{1}{n-1}}>8D$ the inequalities \eqref{eq_contr1} and \eqref{eq_contr2} are in contradiction. This completes the proof.

\bibliography{bowl_uniqueness}

\bibliographystyle{alpha}

\vspace{10mm}

{\sc Courant Institute of Mathematical Sciences, New York University, 251 Mercer Street, New York, NY 10012}\\

\emph{E-mail:} robert.haslhofer@cims.nyu.edu

\end{document}